\documentclass[11pt,a4paper]{article}

\usepackage[a4paper,left=1.08in,right=1.08in,top=0.94in,bottom=0.76in]{geometry}
\usepackage{amsmath,amssymb,amsthm}
\usepackage{microtype}
\usepackage{xcolor}
\usepackage{hyperref}
\usepackage{cite}

\hypersetup{
  colorlinks=true,
  linkcolor=red,
  citecolor=cyan!60!blue,
  urlcolor=blue
}

\makeatletter
\def\leftharpoonfill@{\arrowfill@\leftharpoonup\relbar\relbar}
\def\rightharpoonfill@{\arrowfill@\relbar\relbar\rightharpoonup}
\newcommand\rbjt{\mathpalette{\overarrow@\rightharpoonfill@}}
\newcommand\lbjt{\mathpalette{\overarrow@\leftharpoonfill@}}
\makeatother

\numberwithin{equation}{section}

\newtheorem{theorem}{Theorem}
\newtheorem{lemma}{Lemma}
\newtheorem{corollary}{Corollary}
\newtheorem{conjecture}{Conjecture}
\newtheorem{proposition}{Proposition}

\newcommand{\pico}{\pi_{\mathrm{co}}}
\newcommand{\piunif}{\pi_{\therefore}}

\title{Any $k$-graph with zero $\ell$-degree Tur\'an density is layered}
\author{Jiabao YANG, Xiaona FANG, Yaojun CHEN\footnote{Corresponding author. Email: yaojunc@nju.edu.cn}\\
 \small{School of Mathematics, Nanjing University, Nanjing 210093, P.R. CHINA}}
\date{}

\begin{document}
\pagestyle{plain}
\maketitle
\vspace{-1.1em}

\begin{abstract}
The codegree Tur\'an density $\pi_{\mathrm{co}}(F)$ is the supremum over all $\gamma \in [0,1)$ such that, for arbitrarily large $n$, there exists an $n$-vertex $F$-free $k$-graph $H$ whose every $(k-1)$-subset of vertices lies in at least $\gamma n$ edges.
Ding, Lamaison, Liu, Wang, and Yang (JLMS, 2025) studied the problem of what 3-graphs $F$ satisfy $\pi_{\mathrm{co}}(F) = 0$. They introduced layered $3$-graphs and 
conjectured that a $3$-graph has zero codegree Tur\'an density if and only if it is layered and has zero uniform Tur\'an density. 
For $k\ge 3$,  a $k$-graph is called layered if its vertices can be labelled so that every edge has a unique maximum label and two edges with the same maximum label have the same label multiset.  
In this paper, we show that every non-layered $k$-graph $F$ on $m$ vertices satisfies
\[
  \pico(F)\ge q_{k,m}^{-q_{k,m}}>0,
  \quad \text{where}\quad 
  q_{k,m}=\frac{(k-1)^{m+1}-1}{k-2},
\]
which implies  any $k$-graph with zero $\ell$-degree Tur\'an density is layered, and the case $k=3$ confirms the conjecture of Ding, Lamaison, Liu, Wang, and Yang.   

\vskip 2mm
\noindent\textbf{Keywords.} Layered $k$-graph; codegree Tur\'an density; $\ell$-degree Tur\'an density  
\end{abstract}

\section{Introduction}

A $k$-graph is a $k$-uniform hypergraph.
The Tur\'{a}n number of an $k$-graph $F$, denoted by $\mathrm{ex}(n,F)$, is the maximum number of edges in an $n$-vertex $k$-graph  that contains no copy of $F$.
Tur\'an-type problems form a central topic in extremal combinatorics.  
To describe the limiting behavior of $\mathrm{ex}(n,F)$, we define the Tur\'an density of $F$ by
$$\pi(F)=\lim_{n\to\infty} \frac{\mathrm{ex}(n,F)}{\binom {n}{k}}.$$
The limit exists by the classical averaging theorem of Katona, Nemetz and Simonovits~\cite{KNS1964}.

A natural variant of Tur\'an density, introduced by Mubayi and Zhao~\cite{mubayi}, is the codegree Tur\'an density. For a $k$-graph $H$ and a vertex set $S\subseteq V(H)$, let $d_H(S)$ denote the number of edges containing $S$. 
The minimum codegree of $H$, denoted by $\delta_{\mathrm{co}}(H)$, is defined as the minimum of $d_H(S)$ taken over all $(k-1)$-subsets $S$ of $V(H)$. 
The codegree Tur\'an number $\mathrm{ex}_{\mathrm{co}}(n,F)$ is the largest possible value of $\delta_{\mathrm{co}}(H)$ among all $n$-vertex $F$-free $k$-graphs $H$, 
and the corresponding codegree Tur\'an density is
\[
\pi_{\mathrm{co}}(F):=\lim_{n\to\infty}\frac{\mathrm{ex}_{\mathrm{co}}(n,F)}{n}.
\]
It is known that this limit always exists~\cite{mubayi}.

The $\ell$-degree Tur\'an density is a natural generalization of both the classical Tur\'an density and the codegree Tur\'an density.
Let $H$ be a $k$-graph. For $S \subseteq V(H)$ with $|S| < k$, the minimum $\ell$-degree $\delta_\ell(H)$ is the minimum of $d_H(S)$ over all $\ell$-subsets $S$.
Given a $k$-graph $F$, the $\ell$-degree Tur\'an number $\mathrm{ex}_\ell(n,F)$ is the maximum $\delta_\ell(H)$ that an $n$-vertex $F$-free $k$-graph $H$ can admit. The corresponding $\ell$-degree Tur\'an density is
\[
\pi_\ell(F) = \lim_{n \to \infty} \frac{\mathrm{ex}_\ell(n,F)}{\binom{n}{k-\ell}}.
\]
This limit always exists \cite{LoMarkstrom2014}, and for every $k$-graph $F$,
\[\pico(F)=
\pi_{k-1}(F) \le \pi_{k-2}(F) \le \cdots \le \pi_2(F) \le \pi_1(F) = \pi(F).
\]

For $k\geq3$, determining the exact codegree Tur\'an density of a $k$-graph is difficult in general. 
Nagle~\cite{Nagle1999} conjectured in 1999 that $\pi_{\mathrm{co}}(K_4^{(3)-})=1/4$, and Czygrinow and Nagle~\cite{CzygrinowNagle2001} later conjectured that $\pi_{\mathrm{co}}(K_4^{(3)})=1/2$. 
Falgas-Ravry, Pikhurko, Vaughan and Volec~\cite{FPRVV2023} proved the first conjecture using flag algebras, while the exact value of $\pi_{\mathrm{co}}(K_4^{(3)})$ remains unknown.
Exact codegree Tur\'an densities are known for only a few graphs. Mubayi~\cite{Mubayi2005} proved that $\pi_{\mathrm{co}}(\mathbb F)=1/2$, where $\mathbb F$ is the Fano plane. And the codegree Tur\'an density for other projective geometries are further studied by several researchers \cite{keevash,zhang,fang}.
Falgas-Ravry, Marchant, Pikhurko and Vaughan~\cite{FRMPV2015} proved that $\pi_{\mathrm{co}}(F_{3,2})=1/3$, where $V(F_{3,2})=[5]$ and $E(F_{3,2})=\{123,124,125,345\}$. Piga, Sales and
Sch\"ulke~\cite{PigaSalesSchulke2023} proved that $\pi_{\mathrm{co}}(C_\ell^{(3)-})=0$ for every $\ell\geq5$, where $C_\ell^{(3)-}$ is the tight $3$-uniform cycle of length
$\ell$ with one edge removed.
 Further results on the $k$-graphs $C_\ell^{(k)}$ and $C_\ell^{(k)-}$ can be found in \cite{Ma1, Ma2, Piga, Sarkies}. For $\ell$-degree Tur\'an density, Ai, Ding, Liu and Yang \cite{AiDingLiuYang2026Transfer} introduced tree suspensions and transfer functions for the single-forbidden $\ell$-degree Tur\'an spectrum.

These results also show that codegree Tur\'an density may have quite different behaviors for different forbidden $3$-graphs.
A hypergraph is said to be linear if every pair of distinct hyperedges intersects in at most one vertex.
Ding, Lamaison, Liu, Wang, and Yang \cite{DLLWY} studied the problem of what 3-graphs $F$ satisfy $\pi_{\mathrm{co}}(F) = 0$. 
They proposed a conjecture and reduced the problem to the linear $3$-graph case. 
Before stating their conjectures, we introduce the following notations.

For $U\subseteq V(H)$, let $e(U)$ be the number of edges of $H$ contained in $U$. 
The uniform Tur\'an density $\piunif(F)$ is the supremum of all $d\in[0,1]$ such that, for every $\eta>0$ and $n_0\in\mathbb N$, there is an $F$-free $3$-graph $H$ on at least $n_0$ vertices for which
\(
e(U)\ge d\binom{|U|}{3}-\eta |V(H)|^3
\)
for every $U\subseteq V(H)$.

\vskip 0.5em

Given a $k$-graph $F$, let $f:V(F)\to\mathbb N$ be a function.
For $e\in E(F)$, let $f(e)$ denote the multiset $\{f(v):v\in e\}$.
The function $f$ is a \emph{layered function} of $F$ if it satisfies the following two conditions:
 \begin{itemize}
  \setlength{\itemsep}{1pt}
  \setlength{\parsep}{1pt}
  \setlength{\parskip}{1pt}
  \item[(A1)] Each edge has exactly one vertex whose label is the
  maximum within the edge.
  \item[(A2)] If
  $\max f(e)=\max f(e')$ for $e,e'\in E(F)$, then $f(e)=f(e')$.
\end{itemize}
A $k$-graph is \emph{layered} if it has a layered function. It is worth noting that a layered $k$-graph for $k=3$  was defined in \cite{DLLWY},
which requires the layered function to satisfy a third property: 
\begin{itemize}
   \item[(A3)] If $|f(e)\cap f(e')|\ge k-1$ for $e,e'\in E(F)$, then $f(e)=f(e')$.
\end{itemize}
As also pointed out in \cite{DLLWY}, if there is a function satisfying (A1) and (A2), then there is one satisfying (A1), (A2) and (A3), that is, it is not necessary to require the function satisfying (A3).
In fact, this holds for general $k$-graphs,  see Proposition \ref{layered} in Section \ref{sec4}.

\vskip 0.2em
Ding, Lamaison, Liu, Wang, and Yang~\cite{DLLWY} studied the problem of what 3-graphs $F$ satisfy $\pi_{\mathrm{co}}(F) = 0$, and established the following result. 

\begin{theorem}[Ding, Lamaison, Liu, Wang, and Yang~\cite{DLLWY}]\label{thm:known}
Let $F$ be a $3$-graph.
\begin{enumerate}
 \setlength{\itemsep}{1pt}
  \setlength{\parsep}{1pt}
  \setlength{\parskip}{1pt}
 
\item[(1)] If $\pico(F)=0$, then $\piunif(F)=0$.
\item[(2)] If $F$ is layered, then $\pico(F)=0$ if and only if $\piunif(F)=0$.
\end{enumerate}
\end{theorem}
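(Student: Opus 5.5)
The plan is to handle the two parts separately. Part (2) has one direction given by part (1), so the real content is part (1) plus the implication ``layered and $\piunif(F)=0$ $\Rightarrow$ $\pico(F)=0$''. For both I would use the Reiher--R\"odl--Schacht characterisation, assumed as a black box: $\piunif(F)=0$ if and only if $V(F)$ has an ordering $<$ and a colouring of the pairs of $V(F)$ by red, green and blue such that every edge $x<y<z$ has $xy$ red, $xz$ green and $yz$ blue.

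\emph{Part (1).} I would argue by contraposition: assume $F$ has no such ordered colouring and build $F$-free $3$-graphs with minimum codegree $\Omega(n)$. The starting point is the standard construction. Order $[n]$, colour each pair uniformly at random with one of the three colours, and take as edges all triples $x<y<z$ with $xy$ red, $xz$ green and $yz$ blue. Any copy of $F$ would give $F$ the forbidden ordered colouring, so this graph is $F$-free, and it is uniformly dense.

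Its codegrees are not bounded below, however. A red pair $uv$ needs its third vertex above $v$, a green pair needs it between $u$ and $v$, and a blue pair needs it below $u$. So pairs near the ends of the order, or pairs that are close together, have small codegree. To repair this I would take the disjoint union of a bounded number of such random structures, built on several shifted or reversed orderings of the same vertex set. The aim is that every pair is ``central'' for at least one of the orderings and therefore has linear codegree there. The delicate part is $F$-freeness: one must ensure that any copy of $F$ still induces a valid ordered colouring on some sub-configuration. I expect this to be the main obstacle. Concretely, I would try to show that an edge-minimal obstruction forces a copy of $F$ to live entirely inside a single layer; if that fails, I would fall back to choosing the layers so that edges spanning two layers cannot occur at all.

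\emph{Part (2), the converse.} Let $F$ be layered with layered function $f$ and $\piunif(F)=0$. Let $H$ be a large $3$-graph with $\delta_{\mathrm{co}}(H)\ge\varepsilon n$; we must find a copy of $F$ in $H$. By (A1) and (A2), all edges whose maximum label is $t$ share one label multiset. The plan is to embed $F$ label by label, in increasing order of labels. At stage $t$ the vertices with smaller labels have already been embedded, each into a large candidate set, and the edges with maximum label $t$ all have the same ``type''.

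The key steps are as follows.
\begin{itemize}
\item[(a)] Use the codegree condition: every already-embedded pair has at least $\varepsilon n$ common neighbours. Hence the candidate sets for the top vertex of each edge are linear in size.
\item[(b)] Use $\piunif(F)=0$, through the ordered colouring of $F$, together with a supersaturation/regularity argument inside the linear-sized neighbourhoods. This produces, for each stage, many consistent choices of the lower-label pieces.
\item[(c)] Apply a Ramsey/pigeonhole argument so that vertices sharing a label can be chosen uniformly, which is exactly what (A2) makes possible.
\end{itemize}
I would keep the bookkeeping manageable by iteratively passing to linear-size subsets, so that the number of stages is bounded by $|V(F)|$. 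This is also where the final density constant comes from.
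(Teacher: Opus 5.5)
The paper does not prove this statement. It imports it from \cite{DLLWY} and uses it only as a black box for Corollary~\ref{cor:characterization}. Judged on its own, your proposal does not prove either of the two nontrivial implications.

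\textbf{Part (1).} The construction you leave open is the entire content of (1), and some twist of the ordered-colouring construction is unavoidable. Suppose every edge $x<y<z$ of $H$ has $xy$ red, $xz$ green and $yz$ blue for one global order, and $\delta_{\mathrm{co}}(H)\ge cn$. Among the first $cn$ vertices, no pair can be blue (its third vertex must lie below it) and none can be green (its third vertex must lie between). So all these pairs are red. But a red pair $xy$ needs a blue pair $yz$ with $y$ among the first $cn$ vertices, so it has codegree $0$. Hence copies of $F$ in a codegree-dense host cannot inherit one global ordered colouring, and $F$-freeness of your union is exactly where a new idea is needed.

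Neither of your suggested fixes supplies it. Nothing supports the hope that an obstruction lives in one layer. The fallback of forbidding edges that span two layers contradicts every pair having linear codegree. The natural instantiations also fail:
\begin{itemize}
\item If the layers carry independent colourings, two layers already contain $K_4^{(3)-}$. Take $abc,abd$ from one layer (with $ab$ red, $ac,ad$ green, $bc,bd$ blue) and $acd$ from the other. Yet $\piunif(K_4^{(3)-})>0$.
\item If the layers share one colouring and are rotations of the order, edges are triples whose pattern $(xy,xz,yz)$ is a cyclic shift of (red, green, blue). Three rotations already contain the tight cycle $C_7^{(3)}$: order its vertices $2<1<3<\dots<7$, and the colour constraints around the cycle close up modulo $3$.
\item However, $C_\ell^{(3)}$ with $3\nmid\ell$ has no ordered colouring at all. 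Consecutive edges sharing $\{j+1,j+2\}$ force the rank pattern of $(j+1,j+2,j+3)$ to be the cyclic shift of that of $(j,j+1,j+2)$. This cannot close up after $\ell$ steps.
\end{itemize}

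\textbf{Part (2).} Steps (a)--(c) describe a generic strategy rather than give an argument, and step (a) is false as stated. A vertex at level $t$ is typically the head of several edges with different tail pairs. It must therefore lie in the intersection of several common neighbourhoods, and $\delta_{\mathrm{co}}(H)\ge\varepsilon n$ gives no lower bound on such intersections. The real work is choosing the lower levels so that, for every head simultaneously, all of its tail pairs have a large joint neighbourhood.

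This is also where $\piunif(F)=0$ must enter essentially. $K_4^{(3)-}$ is layered (label the centre $2$ and the other vertices $1$), yet $\pico(K_4^{(3)-})=1/4$. So no argument using only the codegree condition and the layered structure can succeed. Your step (b) does not say how the ordered colouring of $F$ excludes such configurations, for instance by controlling the link graphs of heads inside their tail levels. The converse direction is therefore also unproved.
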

Moreover, they \cite{DLLWY} proposed the following two equivalent conjectures.

\begin{conjecture}[Ding, Lamaison, Liu, Wang, and Yang~\cite{DLLWY}]\label{conj:1.6}
For a $3$-graph $F$, $\pi_{\mathrm{co}}(F) = 0$ if and only if $F$ is layered and satisfies~$\piunif(F)=0$.
\end{conjecture}

\begin{conjecture}[Ding, Lamaison, Liu, Wang, and Yang~\cite{DLLWY}]\label{conj:1.7}
For a linear $3$-graph $F$, $\pi_{\mathrm{co}}(F) = 0$ if and only if $F$ is layered.
\end{conjecture}

In this paper, we consider the codegree Tur\'an density for general non-layered $k$-graph. The main result is as below.
\begin{theorem}\label{thm:main}
Let $k\ge3$. If $F$ is a non-layered $k$-graph on $m$ vertices, then
\[
  \pico(F)\ge q_{k,m}^{-q_{k,m}}>0,
\]
where
\(
  q_{k,m}=((k-1)^{m+1}-1)/(k-2).
\)
\end{theorem}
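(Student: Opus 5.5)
The plan is to build, for each large $n$, an $F$-free $k$-graph $H_n$ with $\delta_{\mathrm{co}}(H_n)\ge q_{k,m}^{-q_{k,m}}n-o(n)$. The host will be a \emph{labelled blow-up}, designed so that any copy of an $m$-vertex $k$-graph inside it inherits a layered function from the host labels. Then no non-layered $F$ can embed. The shape of $q_{k,m}=\sum_{i=0}^{m}(k-1)^i$ suggests the label set: the nodes of a complete $(k-1)$-ary rooted tree $T$ of depth $m$.

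\textbf{Construction.} Totally order the nodes of $T$ so that each node is larger than all of its descendants. Partition $V(H_n)$ into parts $V_x$, $x\in V(T)$, and let $\ell(v)=x$ for $v\in V_x$. The part sizes are chosen with weights at least $q^{-q}$ for $q=q_{k,m}$; I expect roughly geometric weights $q^{-\mathrm{depth}}$, normalised. Define a ``successor'' rule $g$ which assigns to each $(k-1)$-multiset $A$ of labels a node $g(A)$ larger than every element of $A$. A $k$-set $e$ is an edge if and only if some $v\in e$ satisfies $\ell(v)=g(\ell(e\setminus v))$. Each node is then the prescribed maximum for a single multiset, which is exactly what (A2) demands.

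\textbf{Step 1: $F$-freeness.} Let $\phi\colon F\to H_n$ be an embedding and set $f=\ell\circ\phi$. Since $g(A)>\max A$, every edge of the image has a unique vertex of maximal label, so (A1) holds. If $g$ is injective on the multisets that actually occur, edges with the same maximum $g(A)$ carry the same multiset $A$, so (A2) holds. Thus $F$ would be layered, a contradiction.

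The difficulty is that $g$ must be defined on all multisets while $T$ is finite. I would resolve this by induction on depth. Because $F$ has only $m$ vertices, a chain of ``maximum of an edge lies above the vertices of another edge'' dependencies has length at most $m$. So only multisets generated within $m$ rounds of applying $g$ matter, and the $(k-1)$-ary branching with depth $m$ gives exactly $q_{k,m}$ nodes. Where a multiset falls outside this range, I would let $g$ be a uniformly random choice among the admissible nodes. I would then argue, via a first-moment or averaging step over these choices, that some fixed choice of $g$ keeps every $m$-vertex sub-configuration consistent. This is where a factor like $q^{-q}$ would naturally appear: it is the probability that a random assignment on $q$ nodes hits the required pattern.

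\textbf{Step 2: codegree.} Every $(k-1)$-set $S$ extends by all of $V_{g(\ell(S))}$, so $d_{H_n}(S)\ge |V_{g(\ell(S))}|\ge q^{-q}n-O(1)$ once every part has weight at least $q^{-q}$.

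\textbf{Main obstacle.} I expect the hard part to be reconciling the totality of $g$, which is needed for the codegree bound, with its injectivity and monotonicity, which are needed for layering, inside a finite label set. Concretely, this means showing that the depth-$m$ truncation suffices: any embedded $F$ only ``sees'' a portion of the labels on which $g$ is injective, so the induced labelling satisfies (A1) and (A2). I would first try to prove this by induction on the number of vertices of $F$, peeling off a vertex of maximum label each time. Proposition~\ref{layered} lets me ignore (A3).
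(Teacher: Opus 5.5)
There is a genuine gap, and it is structural, not a matter of tuning $g$. In Step~1 you take the host labels, in a fixed total order, as the layered function of the copy. You derive (A1) from $g(A)>\max A$, which is a property of every edge of $H_n$. That property alone already contradicts Step~2.

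Let $t$ be the largest label whose class has at least $k-1$ vertices; such a $t$ exists once $n>(k-2)q$. Take $S\subseteq V_t$ with $|S|=k-1\ge 2$. Any edge $S\cup\{y\}$ needs a unique maximum, so $\ell(y)>t$. Every class above $t$ has at most $k-2$ vertices, so $d_{H_n}(S)\le (k-2)q$ and $\delta_{\mathrm{co}}(H_n)=O(1)$.

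The specific rule also cannot be defined. The top node has no admissible value $g(A)$ whenever it lies in $A$. And $g$ cannot be injective, because there are $\binom{q+k-2}{k-1}>q$ multisets of size $k-1$ but only $q$ values.

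The ``random $g$ plus first moment'' repair does not address this. A copy of $F$ may use any classes of the blow-up, and nothing confines it to a region where $g$ is injective and increasing. Moreover, $F$-freeness must hold for every embedding, so there is no event whose probability you could bound. Finally, $q^{-q}$ in the statement is not a probability; in the paper it is the reciprocal of the number of classes.

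The missing idea is that the host should be only \emph{locally} layered. Its class digraph must contain directed cycles (long ones, of length $>m$), but none that a copy on at most $m$ vertices can see. Each head class must also remember its tail classes, so that (A2)-conflicts can be repaired. A single tree node cannot encode a tail multiset, which is why your $q$ classes are far too few.

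The paper makes the classes the admissible labellings $C\colon T_m\to[q]$, meaning no label repeats on a root-to-node path; there are at most $q^q$ of them. A multiset $\{A_1,\dots,A_d,C\}$ is an edge type when the $d$ main branches of $C$ are the truncations $A_i|_{m-1}$.

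Codegree is linear for the following reason. The truncated trees use at most $q-1$ labels, so a fresh root label exists. The resulting $C$ differs from every $A_i$, because its root label reappears in none of its branches.

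For $F$-freeness, orient each edge of a copy towards its head. Whenever two edges with the same head class have different tail-class multisets, merge the tail classes with equal truncations and pass from depth $s$ to $s-1$. The invariant $|\mathcal Q|\le s$ guarantees the process terminates before the depth runs out.

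In the final quotient, a directed cycle of length $\ell\le s$ would carry the root label of a class down to depth $\ell$ of that same tree, contradicting admissibility. So the quotient is acyclic, and a topological order of it gives the layered function. The layered function is thus built from the copy of $F$, using $|V(F)|\le m$, not read off the host labels.
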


Taking $k=3$ in Theorem~\ref{thm:main} together with Theorem~\ref{thm:known} confirms Conjectures \ref{conj:1.6} and \ref{conj:1.7}. 

\begin{corollary}\label{cor:characterization}
For every $3$-graph $F$,
$\pico(F)=0$ if and only if $F$ is layered and $\piunif(F)=0$;
For every linear $3$-graph $F$,
$\pico(F)=0$ if and only if $F$ is layered.
\end{corollary}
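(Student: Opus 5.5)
The corollary follows by combining Theorem~\ref{thm:main} in the case $k=3$ with Theorem~\ref{thm:known}. For the linear case we also need one external fact about uniform Tur\'an density.

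\emph{First statement.} Let $F$ be a $3$-graph. Suppose first that $\pico(F)=0$. Then part (1) of Theorem~\ref{thm:known} gives $\piunif(F)=0$. Next, suppose $F$ were not layered, with $m=|V(F)|$. Applying Theorem~\ref{thm:main} with $k=3$ gives $q_{3,m}=2^{m+1}-1$ and
\[
\pico(F)\ge (2^{m+1}-1)^{-(2^{m+1}-1)}>0,
\]
which is a contradiction. Hence $F$ is layered and $\piunif(F)=0$. Conversely, if $F$ is layered and $\piunif(F)=0$, then part (2) of Theorem~\ref{thm:known} gives $\pico(F)=0$ directly.

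\emph{Second statement.} Let $F$ be a linear $3$-graph. If $\pico(F)=0$, then $F$ is layered by the first statement. For the converse, by the first statement it suffices to show that every linear $3$-graph satisfies $\piunif(F)=0$.

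I would cite the known fact, due to Kohayakawa, Nagle, R\"odl and Schacht, that linear $3$-graphs have vanishing uniform Tur\'an density. Alternatively, it can be derived from the Reiher--R\"odl--Schacht characterization. That characterization says $\piunif(F)=0$ if and only if there is an ordering of $V(F)$ and a colouring of the pairs covered by edges with three colours $\{\mathrm{red},\mathrm{green},\mathrm{blue}\}$ such that every edge $x<y<z$ has $xy$ red, $xz$ green and $yz$ blue.

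For a linear $F$, every pair of vertices lies in at most one edge. So any ordering works, and we colour the three pairs of each edge as prescribed. Since distinct edges share no pair, these colour assignments never conflict. Hence $\piunif(F)=0$, and the first statement then yields $\pico(F)=0$.

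The only step not already contained in the earlier results of the paper is the fact that linear $3$-graphs have zero uniform Tur\'an density. It is the one point where a result from outside the paper must be invoked; everything else is routine bookkeeping.
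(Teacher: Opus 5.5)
Your proof is correct and follows the paper's argument: Theorem~\ref{thm:main} with $k=3$ forces $F$ to be layered, Theorem~\ref{thm:known} supplies the rest, and the linear case reduces to the fact that linear $3$-graphs have $\piunif(F)=0$. The only differences are cosmetic: you get $\piunif(F)=0$ from part (1) of Theorem~\ref{thm:known} rather than part (2), and you spell out why linear $3$-graphs satisfy the Reiher--R\"odl--Schacht condition, which the paper simply cites.
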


Since $\pi_{\ell} (F)\geq \pico (F)$ for $1\leq \ell \leq k-1$, by Theorem \ref{thm:main}, we have the following.
\begin{corollary}
    Let $1\leq \ell \leq k-1$ and $F$ be a $k$-graph. If $\pi_{\ell} (F)=0 $, then $F$ is layered.
\end{corollary}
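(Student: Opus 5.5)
The plan is to build, for every large $n$, an explicit $F$-free $k$-graph $H$ on $n$ vertices with $\delta_{\mathrm{co}}(H)\ge q_{k,m}^{-q_{k,m}}n-o(n)$. The guiding principle is that $H$ should carry a built-in labelling: every edge of $H$ is of the form $S\cup\{v\}$, where the label of $v$ is strictly larger than every label in $S$ and is determined by the multiset of labels of $S$ through an injective rule. If $\phi$ is an embedding of $F$ into $H$, then $f=\mathrm{label}\circ\phi$ gives, for each edge $e$ of $F$, a unique vertex of maximum label, so (A1) holds. Injectivity of the rule means that the maximum label of an edge determines its whole label multiset, so (A2) holds. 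Hence $f$ is a layered function of $F$, a contradiction, and $H$ is $F$-free. The difficulty is that a literal injective rule needs infinitely many labels, since every $(k-1)$-multiset must receive a new larger label. So I will make the rule injective only "locally", on the at most $m$ labels that one copy of $F$ can use.

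\textbf{Step 1 (bounded label set).} A copy of $F$ touches at most $m$ vertices. Starting from any set of "seed" labels and repeatedly applying the rule "a $(k-1)$-multiset of already present labels $\mapsto$ a new label", only a bounded tree of labels is ever relevant. Each new label is generated from $k-1$ older ones. Unfolding the generation history to depth $m$ gives a $(k-1)$-ary tree with at most $1+(k-1)+\dots+(k-1)^m=q_{k,m}$ nodes; this is exactly where $q:=q_{k,m}$ comes from. I therefore index labels by a finite set $L$ with $|L|\le q^{q}$, for instance by functions or words encoding the node of such a tree together with its history. I define $g$ on $(k-1)$-multisets of $L$ so that $g(M)$ exceeds $\max M$ in a fixed linear order on $L$, and so that $g$ is injective on every family of multisets that can arise inside one $m$-vertex configuration.

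\textbf{Step 2 (the construction).} Partition $V(H)$ into $|L|\le q^q$ parts $V_\lambda$ of equal size, and let $S\cup\{v\}$ be an edge whenever $v\in V_{g(\mathrm{label}(S))}$. Then every $(k-1)$-set $S$, with labels taken as a multiset, has codegree at least $|V_{g(\cdot)}|\ge n/q^q-O(1)$. This gives $\pico(F)\ge q^{-q}$. If a deterministic injective $g$ on a finite ordered set is impossible because of the cyclic wrap-around, I would instead assign each vertex a random label and use a first-moment/alteration argument. That argument deletes the few bad $m$-sets, and I would check that it does not destroy the codegree by more than $o(n)$.

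\textbf{Step 3 (verification, the main obstacle).} Given an embedding $\phi$ of $F$, I must extract a genuine layered function. The raw labels may violate the integer-order requirement, because $L$ is finite and the "larger" relation cannot be a global order if every multiset must have an image. The point to prove is the following. Restricted to the at most $m$ labels used by $\phi(F)$, the relation "$\lambda$ is produced from a multiset containing $\mu$" is acyclic, and $g$ is injective there, thanks to the depth-$m$ tree encoding. A topological sort of this relation then gives integer labels satisfying (A1) and (A2). Proving this acyclicity and local injectivity, with exactly the $q^q$ parts coming from the depth-$m$, $(k-1)$-ary tree count, is where I expect the real work to lie. By Proposition~\ref{layered}, (A3) need not be checked.
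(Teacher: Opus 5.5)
\textbf{Gap: you reprove the main theorem instead of citing it, and the core step of that reproof cannot work as stated.}

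The paper proves this corollary in one line. Since $\pi_\ell(F)\ge\pico(F)$, if $F$ were non-layered on $m$ vertices then Theorem~\ref{thm:main} would give $\pi_\ell(F)\ge q_{k,m}^{-q_{k,m}}>0$. Your proposal never states this comparison. It follows by averaging: for an $\ell$-set $S$, $(k-\ell)\,d_H(S)=\sum_{T}d_H(T)\ge\binom{n-\ell}{k-1-\ell}\delta_{\mathrm{co}}(H)$, summing over $(k-1)$-sets $T\supseteq S$. Without it, even a complete version of your argument only bounds $\pico(F)$.

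More seriously, you try to reprove Theorem~\ref{thm:main} and leave its core open. $L$ and $g$ are never defined; as you note yourself, $g(M)>\max M$ for a linear order on a finite $L$ is impossible. Moreover, the mechanism of Step~3 fails for every choice of $L$ and $g$.

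Step~3 needs $g$ to be injective on the tail multisets of every configuration on at most $m$ vertices, so that a topological sort of the raw labels is a layered function. A non-layered $F$ has at least two edges, so $m\ge k+1$. Fix a $(k-2)$-multiset $N$ of labels and some $a\in N$. A head label must differ from its tail labels, so $b\mapsto g(N\cup\{b\})$ maps the finite set $L$ into $L\setminus\{a\}$. Hence $g(N\cup\{b\})=g(N\cup\{c\})=h$ for some $b\ne c$. The $(k+1)$-vertex subgraph of $H$ with edges labelled $N\cup\{b,h\}$ and $N\cup\{c,h\}$ then violates (A2) under the raw labels. Such collisions are unavoidable and must be resolved by coarsening the labelling. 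That is the missing idea.

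In the paper this is done as follows.
\begin{itemize}
\item The labels are admissible labelings of $T_m$ with values in $[q_{k,m}]$, meaning no label is repeated on a root-to-vertex path.
\item $A_1\cdots A_d\to_m C$ means the main branches of $C$ are $A_1|_{m-1},\dots,A_d|_{m-1}$. Such a $C$ exists with a fresh root label, because these branches use at most $q_{k,m}-1$ labels.
\item Lemma~\ref{lem:merging} absorbs collisions. Two tail multisets with a common head agree after truncation to $T_{s-1}$, so their classes are merged and the relation descends to $\to_{s-1}$, while $|\mathcal Q|\le s$ is preserved.
\item Acyclicity holds because a directed cycle of length $t\le|\mathcal Q|\le s$ would copy the root label of a tree down to depth $t$, contradicting admissibility.
\end{itemize}

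Your random-alteration fallback does not help either. When edges are determined by boundedly many labels, any configuration that occurs once occurs on $\Omega(n^m)$ vertex sets. So there is no small family of bad $m$-sets to delete, and a first-moment/alteration argument has nothing to work with.
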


The main idea for proving Theorem \ref{thm:main} is as follows. Firstly, if an $m$-vertex $k$-graph $F$ is layered, we can identify vertices at the same level and orient the edges from lower to higher levels. This layered condition can be translated into the consistency and acyclicity of a corresponding quotient digraph, which is much easier to verify. Secondly,
we assign to each vertex of $F$ a complete rooted $(k-1)$-ary tree of depth $m$, whose vertices are labeled such that vertices on the same path from the root receive distinct labels. If, for any edge of $F$, the $k$ corresponding labeled trees satisfy a specific relation, specifically, if one labeled tree records the truncated information of the other $k-1$ trees, then $F$ is layered. This property enables us to construct a $k$-graph $H_n$ with linear minimum codegree such that all of its subgraphs of order at most $m$ are layered, that is, $H_n$ contains no
non-layered subgraphs of order at most $m$.

\section{Quotient digraphs and labelled
\texorpdfstring{$(k-1)$}{(k-1)}-ary trees}

Throughout the rest of this paper, we always assume that $k\ge 3$ and $d=k-1$.  

A \emph{$d$-to-$1$ orientation} of a
$k$-graph $G$ chooses one vertex of every edge as its head.  If the head of
an edge $\{x_1,\ldots,x_d,z\}$ is $z$, we write the oriented edge as
\(
  x_1\cdots x_d\rightarrow z;
\)
the order of $x_1,\ldots,x_d$ is irrelevant.

Let $\mathcal Q$ be a partition of $V(G)$, and $Q(v)$ be the part
containing a given vertex $v$.  The loopless \emph{quotient digraph}
$D(\rbjt G,\mathcal Q)$ has vertex set $\mathcal Q$.  For distinct
$X,Z\in\mathcal Q$, it contains the arc $X\to Z$ if there is an oriented
edge $x_1\cdots x_d\to z$ such that $Q(z)=Z$ and
$X\in\{Q(x_1),\ldots,Q(x_d)\}$.

For a positive integer $n$, let $[n] = \{1,\ldots,n\}$. The following lemma translates the layered condition into a corresponding property of the quotient digraph $D(\rbjt G,\mathcal Q)$. Its first condition says that a head class determines one tail-class multiset, while the second condition allows the classes to be ordered from lower to higher levels.

\begin{lemma}\label{lem:quotient-characterization}
A $k$-graph $G$ is layered if and only if it has a $d$-to-$1$
orientation $\rbjt G$ and a partition $\mathcal Q$ of $V(G)$ satisfying the
following two conditions.
\begin{enumerate}
 \setlength{\itemsep}{1pt}
  \setlength{\parsep}{1pt}
 \item[(1)] If
  $
    x_1\cdots x_d\to z$ and
    $y_1\cdots y_d\to z'
  $
  are oriented edges with $Q(z)=Q(z')$, then
  $
    \{Q(x_1),\ldots,Q(x_d)\}
    =\{Q(y_1),\ldots,Q(y_d)\}
  $
  as multisets.
  \item[(2)] For every oriented edge $x_1\cdots x_d\to z$,
  $
    Q(z)\notin\{Q(x_1),\ldots,Q(x_d)\}.
 $
  And  $D(\rbjt G,\mathcal Q)$ is acyclic.
\end{enumerate}
\end{lemma}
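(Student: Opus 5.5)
We prove the two directions separately; in each we pass explicitly between a layered function and the pair (orientation, partition).

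\emph{Layered $\Rightarrow$ (1),(2).} Let $f$ be a layered function of $G$. By (A1), every edge $e$ has a unique vertex of maximum label; we make this vertex the head of $e$. This gives a $d$-to-$1$ orientation $\rbjt G$. We take $\mathcal Q$ to be the partition of $V(G)$ into the level sets $f^{-1}(i)$, so that $Q(u)=Q(v)$ if and only if $f(u)=f(v)$.

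For condition (1), consider oriented edges $x_1\cdots x_d\to z$ and $y_1\cdots y_d\to z'$ with $Q(z)=Q(z')$. Then $\max f(e)=f(z)=f(z')=\max f(e')$, so (A2) gives $f(e)=f(e')$ as multisets. Removing one copy of the common maximum value leaves equal multisets of tail labels. Since classes correspond bijectively to label values, the multisets of tail classes coincide.

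For condition (2), each tail has a label strictly less than the head's label, so its class differs from $Q(z)$. Every arc $X\to Z$ of $D(\rbjt G,\mathcal Q)$ therefore strictly increases the label value attached to the class. Hence $D(\rbjt G,\mathcal Q)$ has no directed cycle.

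\emph{(1),(2) $\Rightarrow$ layered.} Since $D=D(\rbjt G,\mathcal Q)$ is acyclic, we fix a topological ordering $Q_1,\dots,Q_r$ of its vertices, in which every arc goes from a lower index to a higher one. We define $f(v)=i$ when $v\in Q_i$.

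To check (A1), take an oriented edge $x_1\cdots x_d\to z$. By the first part of (2), each $Q(x_j)$ differs from $Q(z)$, so $D$ contains the arc $Q(x_j)\to Q(z)$. Hence $f(x_j)<f(z)$ for every $j$, and $z$ is the unique vertex of maximum label in the edge.

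To check (A2), suppose $\max f(e)=\max f(e')$. The maxima are attained at the heads $z$ and $z'$, so $f(z)=f(z')$ and therefore $Q(z)=Q(z')$. By (1), the tail class multisets agree. Because $f$ is constant on classes and injective on them, the label multisets $f(e)$ and $f(e')$ agree as well.

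We do not expect a real obstacle. The only subtle point is the direction from the quotient conditions to layeredness, where the irreflexivity part of (2) is needed to produce genuine arcs $Q(x_j)\to Q(z)$, and hence strict inequalities. Without it the quotient digraph is loopless by definition and would record nothing about a tail lying in the head's class. The other care needed is that the multisets in (1) are counted with multiplicity, so that the tail multisets transfer exactly to label multisets through the bijection between classes and labels.
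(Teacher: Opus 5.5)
Your proposal is correct and follows essentially the same argument as the paper. Both use the level-set partition with each edge oriented toward its unique maximum in one direction. In the other, both label by a topological ordering of the acyclic quotient digraph, using the irreflexivity clause of (2) to obtain strict inequalities and the injectivity of class labels to transfer the tail multisets.
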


\begin{proof}
Suppose that $f:V(G)\to\mathbb N$ is a layered function.  Let
$\mathcal Q$ consist of the nonempty classes $f^{-1}(i)$, so two vertices lie
in the same part exactly when they have the same $f$-label.  By (A1), every
edge has a unique vertex with the largest label; orient the edge towards that
vertex.

Consider two oriented edges
$
  x_1\cdots x_d\to z$ and $
  y_1\cdots y_d\to z'$
with $Q(z)=Q(z')$.  Then $f(z)=f(z')$, and these are the maximum labels of
the two edges.  By (A2),
$
  \{f(x_1),\ldots,f(x_d),f(z)\}
  =\{f(y_1),\ldots,f(y_d),f(z')\}$
as multisets. Deleting the common maximum label yields
$
  \{f(x_1),\ldots,f(x_d)\}
  =\{f(y_1),\ldots,f(y_d)\}.
$
Thus
$
  \{Q(x_1),\ldots,Q(x_d)\} \\
  =\{Q(y_1),\ldots,Q(y_d)\}.
$

For every oriented edge $x_1\cdots x_d\to z$, condition (A1) gives
$f(x_i)<f(z)$ for every $i\in[d]$.  Consequently,
$Q(z)\notin\{Q(x_1),\ldots,Q(x_d)\}$.  Every arc $X\to Z$ of
$D(\rbjt G,\mathcal Q)$ comes from an oriented edge whose tail has a vertex in
$X$ and whose head lies in $Z$.  The common $f$-label on $X$ is therefore
smaller than the common $f$-label on $Z$.  Thus the labels strictly increase
along every directed path, so the quotient digraph is acyclic.

Conversely, suppose that $\rbjt G$ and $\mathcal Q$ satisfy the two stated
conditions.  Since $D(\rbjt G,\mathcal Q)$ is acyclic, it has a topological
ordering~\cite{Bang-Jensen}; that is, there is a bijection
$
  \lambda:\mathcal Q\to[|\mathcal Q|]
$
such that $\lambda(X)<\lambda(Z)$ whenever $X\to Z$ is an arc of
$D(\rbjt G,\mathcal Q)$.  Define
$
  f(v)=\lambda(Q(v))
$
for every $v\in V(G)$.

Let $x_1\cdots x_d\to z$ be an oriented edge. Since $Q(x_i)\ne Q(z)$ for every $i\in[d]$,
$Q(x_i)\to Q(z)$ is an arc of $D(\rbjt G,\mathcal Q)$. Therefore,
$f(x_i)<f(z)$ for every $i\in[d]$, which implies that (A1) holds.

Now consider two edges with the same maximum $f$-label, and write their
oriented forms as
$x_1\cdots x_d\to z$ and $y_1\cdots y_d\to z'.$
Then $\lambda(Q(z))=\lambda(Q(z'))$.  Since $\lambda$ is injective,
$Q(z)=Q(z')$.  The first condition gives
$
  \{Q(x_1),\ldots,Q(x_d)\}
  =\{Q(y_1),\ldots,Q(y_d)\}
$
as multisets.  Applying $\lambda$ to the tail classes and using
$f(z)=f(z')$, we obtain equal label multisets on the two edges.  Thus
(A2) holds.  Hence $f$ is layered.
\end{proof}

Fix $m\ge1$ and let
$q_{k,m}=1+d+\cdots+d^m
  =(d^{m+1}-1)/(d-1).$
For $0\le r\le m$, let $T_r$
be the complete rooted $d$-ary tree of depth $r$ with 
$ V(T_r)=\cup_{j=0}^{r}[d]^j,$
where $[d]^0=\{\varnothing\}$ and $[d]^j=\{a_1\cdots a_j \mid a_1,\ldots,a_j\in[d]\} $ is the set of all $d$-ary words of length $j$ for $j\ge 1$. For given two words $w$ and $i$, let $wi$ be the word obtained by placing $i$ after $w$. The root of $T_r$ is the empty word $\varnothing$, and the $d$ children of a word $w$ of length less than $r$ are $w1,\ldots,wd$. 
Thus the vertices at depth $j$ are exactly the words of length $j$, and
\(
  |T_r|=1+d+\cdots+d^r.
\)

An \emph{admissible labeling} of $T_r$ is a map $A:T_r\to[q_{k,m}]$
such that $A(u)\ne A(v)$ whenever $u$ is a proper prefix of $v$.
Equivalently, no label is repeated on a path from the root to a vertex.
Labels may still be repeated in different branches.  Let $\mathcal C_r$ be
the finite collection of all admissible labelings of $T_r$.  Thus each
element of $\mathcal C_r$ is an entire labelled tree.

For $A\in\mathcal C_r$ and $0\le s\le r$, write $A|_s$ for the restriction
of $A$ to $T_s$.  In other words, $A|_s$ is obtained by deleting the vertices
at depths $s+1,\ldots,r$.  In particular, $A|_s\in\mathcal C_s$.

Suppose that $r\ge1$ and $C\in\mathcal C_r$.  The $d$ subtrees rooted at the
children $1,\ldots,d$ of the root all have depth $r-1$.  We identify each of
these subtrees with $T_{r-1}$.  For $i\in[d]$, define
\[
  C^i:T_{r-1}\to[q_{k,m}],
  \quad C^i(w)=C(iw),
\]
Thus $C^i(\varnothing)=C(i)$, $C^i(1)=C(i1)$. Under the
identification $w\leftrightarrow iw$, the map $C^i$ is exactly the labeling
of the subtree rooted at $i$.  Since placing the same first letter before two
words preserves the ancestor relation, the admissibility of $C$ implies that
$C^i$ is admissible.  Hence $C^1,\ldots,C^d\in\mathcal C_{r-1}$; we call them
the main branches of $C$.

For $1\le r\le m$ and $A_1,\ldots,A_d,~C\in\mathcal C_r$, we write
\begin{equation}\label{eq:tree-relation}
  A_1\cdots A_d\rightarrow_r C
  \quad\Longleftrightarrow\quad
  \{C^1,\ldots,C^d\}
  =\{A_1|_{r-1},\ldots,A_d|_{r-1}\}
  \quad\text{as multisets}.
\end{equation}
Thus the main branches of $C$ are precisely the restrictions of
$A_1,\ldots,A_d$ to their first $r$ levels.  Equality as multisets means that
the branches may be permuted, while repeated branches are retained.  In
particular, the order of $A_1,\ldots,A_d$ is irrelevant, and repetitions are
allowed.  We may therefore regard $C$ as recording the truncated information
of $A_1,\ldots,A_d$ in its $d$ main branches.

\begin{lemma}\label{lem:tree-basic}
The relation in~\eqref{eq:tree-relation} has the following properties.
\begin{enumerate}
 \setlength{\itemsep}{1pt}
  \setlength{\parsep}{1pt}
  \setlength{\parskip}{1pt}
 
  \item[(1)] If $2\le r\le m$ and $A_1\cdots A_d\to_r C$, then $(A_1|_{r-1})\cdots(A_d|_{r-1})
    \to_{r-1} C|_{r-1}.$
  \item[(2)] If $1\le r\le m$ and $A_1\cdots A_d\to_r C$, then $C\notin\{A_1,\ldots,A_d\}.$
  \item[(3)] For every $A_1,\ldots,A_d\in\mathcal C_m$, there is
  $C\in\mathcal C_m$ such that
  $A_1\cdots A_d\to_m C.$
\end{enumerate}
\end{lemma}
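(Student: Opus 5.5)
The plan is to prove the three parts separately, each straight from the definition \eqref{eq:tree-relation}. All the arguments rest on two simple facts. First, restriction commutes with taking main branches: $(C|_{r-1})^i=(C^i)|_{r-2}$ for $r\ge2$, because both maps send $w$ of length at most $r-2$ to $C(iw)$. Second, $(A|_{r-1})|_{r-2}=A|_{r-2}$.

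For (1), assume $A_1\cdots A_d\to_r C$ with $r\ge2$, so $\{C^1,\ldots,C^d\}=\{A_1|_{r-1},\ldots,A_d|_{r-1}\}$ as multisets. Restrict every element of both multisets to $T_{r-2}$; this preserves multiset equality, since a bijection matching the elements is still a matching after restriction. The left side becomes $\{(C|_{r-1})^1,\ldots,(C|_{r-1})^d\}$ and the right side becomes $\{(A_1|_{r-1})|_{r-2},\ldots,(A_d|_{r-1})|_{r-2}\}$. This is exactly the statement $(A_1|_{r-1})\cdots(A_d|_{r-1})\to_{r-1}C|_{r-1}$.

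For (2), argue by contradiction: suppose $C=A_j$ for some $j$. I would rule this out using the root label together with admissibility. The multiset equality gives some $i$ with $C^i=A_j|_{r-1}=C|_{r-1}$. Comparing root labels gives $C(i)=C^i(\varnothing)=C(\varnothing)$. But $\varnothing$ is a proper prefix of $i$, so admissibility of $C$ is violated. The case $r=1$ needs nothing special, because $T_0$ is just the root and the same comparison applies.

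For (3), which I expect to be the only real construction, I would build $C$ explicitly. Put $C(iw)=A_i(w)$ for $i\in[d]$ and $|w|\le m-1$, so that $C^i=A_i|_{m-1}$. The root label $C(\varnothing)$ must differ from every label used anywhere in $C$ below the root. Those labels lie on at most $d+d^2+\cdots+d^m=q_{k,m}-1$ vertices, so at least one label in $[q_{k,m}]$ is unused, and I assign it to the root. This counting is exactly why the label set has size $q_{k,m}$. Admissibility of $C$ then splits into two kinds of ancestor pairs. A pair $(\varnothing,v)$ is fine by the choice of the root label. A pair $(iu,iv)$ with $u$ a proper prefix of $v$ is fine because $A_i$ is admissible. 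Two vertices in different main branches are never in a prefix relation, so there are no other pairs to check. Finally, $A_1\cdots A_d\to_m C$ holds by construction.
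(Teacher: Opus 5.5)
Your proposal is correct and follows the paper's proof essentially step for step. Part (1) uses the identity $(C|_{r-1})^i=(C^i)|_{r-2}$. Part (2) compares $C(i)=C^i(\varnothing)$ with the root label $C(\varnothing)$. Part (3) counts $d+\cdots+d^m=q_{k,m}-1$ occupied positions to find a free root label, then attaches the truncations $A_i|_{m-1}$ as the main branches; your explicit check of ancestor pairs there just spells out what the paper states in one line.
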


\begin{proof}
For the first statement, the main branches of $C|_{r-1}$ are
$(C^1)|_{r-2},\ldots,(C^d)|_{r-2}$; indeed,
$(C|_{r-1})^i=(C^i)|_{r-2}$ for every $i\in[d]$.  Since
$A_1\cdots A_d\to_r C$, these trees are
$A_1|_{r-2},\ldots,A_d|_{r-2}$ in some order.  This is precisely the
relation
$(A_1|_{r-1})\cdots(A_d|_{r-1})
  \to_{r-1} C|_{r-1}$
required in the first statement.

For the second statement, suppose that $C=A_j$ for some $j\in[d]$.  
By the definition of $A_1\cdots A_d\to_r C$, one of the main branches of $C$, say $C^i$, is equal to $A_j|_{r-1}=C|_{r-1}$.
Evaluating these two labelled trees at their roots gives $C(i)=C^i(\varnothing)=C|_{r-1}(\varnothing)=C(\varnothing)$.
But $\varnothing$ is a proper prefix of the one-letter word $i$, contradicting the admissibility of $C$.

For the third statement, each tree $A_j|_{m-1}$ has
$1+d+\cdots+d^{m-1}$ vertices.  Hence the $d$ trees together use at most $d(1+d+\cdots+d^{m-1})=d+d^2+\cdots+d^m=q_{k,m}-1$
different labels. Choose
$\gamma\in[q_{k,m}]$ that occurs in none of them.  Label a new root by $\gamma$
and attach $A_1|_{m-1},\ldots,A_d|_{m-1}$ below it as the $d$ main branches.
Every root-to-vertex path is admissibly labelled.  The resulting tree $C$ belongs to $\mathcal C_m$ and
satisfies $A_1\cdots A_d\to_m C$.
\end{proof}

\begin{lemma}\label{lem:merging}
Let $\rbjt G$ be a $d$-to-$1$ orientation of a $k$-graph $G$ with at most $m$
vertices.  Suppose that there is a map $\chi:V(G)\to\mathcal C_m$ such that $\chi(x_1)\cdots\chi(x_d)\to_m\chi(z)$
for every oriented edge $x_1\cdots x_d\to z$.  Then $V(G)$ has a partition
$\mathcal Q$ satisfying the two conditions in
Lemma~\ref{lem:quotient-characterization}.
\end{lemma}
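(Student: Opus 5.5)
The plan is to take $\mathcal Q$ to be the partition of $V(G)$ induced by a suitable truncation depth of $\chi$. For $0\le r\le m$, let $\mathcal P_r$ be the partition in which $u,v$ lie in the same part iff $\chi(u)|_r=\chi(v)|_r$, and let $c_r=|\mathcal P_r|$. Since $\chi(u)|_r=\chi(v)|_r$ implies $\chi(u)|_{r-1}=\chi(v)|_{r-1}$, the partition $\mathcal P_r$ refines $\mathcal P_{r-1}$, so $1\le c_0\le c_1\le\cdots\le c_m\le m$. Call step $r$ \emph{stable} if $c_r=c_{r-1}$, equivalently $\mathcal P_r=\mathcal P_{r-1}$.

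\textbf{Choosing the depth.} First I show there is a stable $r\ge1$ with $c_r\le r$. Among the $m$ steps, at most $m-1$ can be strict increases, since $c$ moves from at least $1$ to at most $m$; so a stable step exists. Let $r^*$ be the last stable step, and suppose $c_{r^*}\ge r^*+1$. The remaining $m-r^*$ steps are all strict, so $c_m\ge r^*+1+(m-r^*)=m+1$, which is a contradiction. Fix this $r$ and put $\mathcal Q=\mathcal P_r=\mathcal P_{r-1}$. The key tool is a truncation fact obtained by iterating Lemma~\ref{lem:tree-basic}(1): for every oriented edge $x_1\cdots x_d\to z$ and every $1\le s\le m$,
\[
(\chi(x_1)|_s)\cdots(\chi(x_d)|_s)\to_s \chi(z)|_s .
\]
Thus the main branches of $\chi(z)|_s$ are $\chi(x_1)|_{s-1},\ldots,\chi(x_d)|_{s-1}$ as a multiset.

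\textbf{Conditions (1) and the first half of (2).} For condition (1), suppose $Q(z)=Q(z')$. Then $\chi(z)|_r=\chi(z')|_r$, so these trees have the same multiset of main branches. By the truncation fact with $s=r$, this means the multisets $\{\chi(x_i)|_{r-1}\}$ and $\{\chi(y_i)|_{r-1}\}$ coincide. Since $\mathcal Q=\mathcal P_{r-1}$, the tail-class multisets agree. For the first half of (2), suppose $Q(z)=Q(x_j)$. Then $\chi(z)|_r=\chi(x_j)|_r$, and together with the relation at depth $r$ this contradicts Lemma~\ref{lem:tree-basic}(2), which applies since $r\ge1$.

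\textbf{Acyclicity (the main obstacle).} Because $\mathcal Q=\mathcal P_r$, each class $X$ carries a well-defined tree $T_X=\chi(v)|_r$ for $v\in X$. Because $\mathcal P_{r-1}=\mathcal P_r$, every class is also determined by its depth-$(r-1)$ truncation. An arc $X\to Z$ then means that $T_X|_{r-1}$ is a main branch of $T_Z$. Suppose there is a directed cycle $X_1\to X_2\to\cdots\to X_t\to X_1$ with $2\le t\le c_r\le r$. Starting at $T_{X_1}$, its branch at some child is $T_{X_t}|_{r-1}$. Inside that subtree, the branch at some child is $T_{X_{t-1}}|_{r-2}$, and so on. Reading the root labels down this descending path gives a root-to-vertex path of length $r$ in $T_{X_1}$. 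Its labels at depths $0,1,2,\ldots$ are the root labels of $X_1,X_t,X_{t-1},\ldots$, taken cyclically. Since $t\le r$, the root label of $X_1$ reappears at depth $t$, which is a proper descendant of the root. This contradicts admissibility, so $D(\rbjt G,\mathcal Q)$ is acyclic.

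The delicate point I expect to need care is the bookkeeping in this last step. One must check that each descent stays within the truncation depth; this holds because the $j$-th step uses a depth-$(r-j)$ truncation, and $t\le r$ ensures the repetition occurs before the depth runs out.
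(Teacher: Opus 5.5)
Your proof is correct, but it obtains the partition by a different route from the paper.

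The paper uses an iterative merging procedure. It starts from singletons at depth $s=m$. Whenever condition (1) fails, it merges the classes forced together by the matched branches and truncates the trees to depth $s-1$. Throughout, it maintains the invariant $|\mathcal Q|\le s$ together with $\psi(Q(x_1))\cdots\psi(Q(x_d))\to_s\psi(Q(z))$, and termination gives condition (1).

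You instead define $\mathcal Q$ directly as the truncation-equivalence $\mathcal P_r$. You choose $r\ge1$ by a pigeonhole argument on the monotone sequence $c_0\le\cdots\le c_m\le m$, so that both $\mathcal P_r=\mathcal P_{r-1}$ and $c_r\le r$ hold.

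\begin{itemize}
\item Stability, $\mathcal P_r=\mathcal P_{r-1}$, makes (1) immediate, because classes correspond bijectively to their depth-$(r-1)$ truncations.
\item The bound $c_r\le r$ plays exactly the role of the paper's invariant $|\mathcal Q|\le s$ in the cycle argument.
\item The rest matches the paper: the use of Lemma~\ref{lem:tree-basic}(2) for the first half of (2), and the root-label descent along a cycle. You descend backwards along the cycle where the paper goes forwards, which is immaterial.
\end{itemize}

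What your version buys is an explicit, non-iterative description of $\mathcal Q$. It also avoids the bookkeeping for the generated equivalence relation and the check that $\psi'$ is well defined. The paper's procedure merges only when a violation of (1) forces it, so its output can be finer than a full truncation-equivalence; the lemma does not need this.

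One small point of precision: to get $t\le c_r$, take the directed cycle to be simple, for example of minimum length as the paper does. A closed walk in the quotient digraph could otherwise be longer than $r$.
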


\begin{proof}
We now construct the required partition.  Throughout the construction, we
maintain a partition $\mathcal Q$ of $V(G)$, an integer $1\le s\le m$, and a
map $\psi:\mathcal Q\to\mathcal C_s$ such that
\begin{equation}\label{eq:merging-invariant}
  |\mathcal Q|\le s
  \quad\text{and}\quad
  \psi(Q(x_1))\cdots\psi(Q(x_d))
  \to_s\psi(Q(z))
\end{equation}
for every oriented edge $x_1\cdots x_d\to z$.
Initially, let $\mathcal Q$ be the partition into singleton parts, let $s=m$,
and define $\psi(\{v\})=\chi(v)$ for every $v\in V(G)$.  Since
$|V(G)|\le m$, the conditions in~\eqref{eq:merging-invariant} hold.

Suppose that the first condition of Lemma~\ref{lem:quotient-characterization} fails for the current partition.
Then there are oriented edges $x_1\cdots x_d\to z$ and $y_1\cdots y_d\to z'$ such that $Q(z)=Q(z')$ but $\{Q(x_1),\ldots,Q(x_d)\}\ne\{Q(y_1),\ldots,Q(y_d)\}$ as multisets. Since $|\mathcal Q|\ge2$, we have $s\ge 2$. 
The corresponding relations in \eqref{eq:merging-invariant} have the same head tree. Hence, after interchanging $y_1,\ldots,y_d$ if necessary, we have
\begin{equation}\label{eq:matching-branches}
  \psi(Q(x_i))|_{s-1}
  =\psi(Q(y_i))|_{s-1}
  \quad \text{for} \quad i\in[d].
\end{equation}
Because the two tail-part multisets are different, $Q(x_i)\ne Q(y_i)$ for at least one $i\in[d]$.  

Let $\sim$ be the equivalence relation on the parts of $\mathcal Q$ generated by
$Q(x_i)\sim Q(y_i)$ for $i\in[d]$.
Let $\mathcal Q'$ be the partition obtained by taking the unions of the
$\sim$-equivalence classes.  Equivalently, $\mathcal Q'$ is the finest
coarsening of $\mathcal Q$ in which all these identifications hold.

By~\eqref{eq:matching-branches}, every generating identification joins two old parts whose tree labels have the same restriction to $T_{s-1}$.  
By transitivity, all old parts contained in one part of $\mathcal Q'$ have the same restriction to $T_{s-1}$.  
We may therefore define $\psi'(X')=\psi(X)|_{s-1}$, where $X$ is any old part contained in $X'$.  This definition is well defined.

For every oriented edge $a_1\cdots a_d\to c$, the invariant
\eqref{eq:merging-invariant} and Lemma~\ref{lem:tree-basic}(1) give
\[
  \psi'(Q'(a_1))\cdots\psi'(Q'(a_d))
  \to_{s-1}\psi'(Q'(c)).
\]
Moreover, at least one pair of distinct old parts has been identified, so $|\mathcal Q'|\le|\mathcal Q|-1\le s-1$.
Thus the invariant remains valid after replacing $(\mathcal Q,s,\psi)$ by $(\mathcal Q',s-1,\psi')$.

Each merging step strictly decreases the number of parts.  Hence the process
terminates after finitely many steps.
For the terminal partition, the first condition of
Lemma~\ref{lem:quotient-characterization} holds.

Let $x_1\cdots x_d\to z$ be any oriented edge.  By \eqref{eq:merging-invariant} and Lemma~\ref{lem:tree-basic}(2), $\psi(Q(z))\notin\{\psi(Q(x_1)),\ldots,\psi(Q(x_d))\}$.
Therefore $Q(z)\notin\{Q(x_1),\ldots,Q(x_d)\}$.

	\vskip 0.2em

It remains to show that $D(\rbjt G,\mathcal Q)$ is acyclic.  Suppose otherwise, and choose a directed cycle of minimum length: $X_0\to X_1\to\cdots\to X_\ell=X_0$.
Then $2\le\ell\le|\mathcal Q|\le s$.

For each $0\le i<\ell$, the arc $X_i\to X_{i+1}$ comes from an oriented edge whose head lies in $X_{i+1}$ and one of whose tail vertices lies in $X_i$.
It follows from~\eqref{eq:merging-invariant} that there is $\varepsilon_i\in[d]$ such that
\begin{equation}\label{eq:cycle-branch}
  \psi(X_{i+1})^{\varepsilon_i}
  =\psi(X_i)|_{s-1}.
\end{equation}

Let $w_0=\varnothing$ and $\alpha=\psi(X_0)(\varnothing)$.  We prove inductively that, for every $0\le i\le\ell$, there is a word $w_i$ of length $i$ over $[d]$ such that $\psi(X_i)(w_i)=\alpha$.
This is true for $i=0$.  Suppose it holds for some $i<\ell$.  Since $i\le\ell-1\le s-1$, the word $w_i$ belongs to $T_{s-1}$.  Define $w_{i+1}=\varepsilon_iw_i$.
Using~\eqref{eq:cycle-branch}, we obtain
\[
  \psi(X_{i+1})(w_{i+1})
  =\psi(X_{i+1})^{\varepsilon_i}(w_i)
  =\psi(X_i)(w_i)
  =\alpha.
\]
This proves the assertion.

Finally, since $X_\ell=X_0$, the label $\alpha$ occurs in $\psi(X_0)$ both
at the root $\varnothing$ and at the vertex $w_\ell$ of depth $\ell\ge1$.
Since $\varnothing$ is a proper prefix of $w_\ell$, this contradicts the
admissibility of $\psi(X_0)$.  Therefore, $D(\rbjt G,\mathcal Q)$ is acyclic.

Thus $\mathcal Q$ satisfies both conditions in
Lemma~\ref{lem:quotient-characterization}, completing the proof.
\end{proof}

\section{Proof of Theorem~\ref{thm:main}}\label{sec3}

Let $\mathcal P_{k,m}$ be the collection of all $k$-element multisets  $\{A_1,\ldots,A_k\}$ with $A_1,\ldots,A_k\in\mathcal C_m$ for which at
least one of
\[
  A_1\cdots A_{k-1}\to_m A_k,\quad A_1\cdots A_{k-2}A_{k}\to_m A_{k-1},\quad \ldots  \quad ,\quad A_2\cdots A_k\to_m A_1
\]
holds.  
The order of $A_1,\ldots,A_k$ is irrelevant, and they need not be distinct.

A $\mathcal P_{k,m}$-coloring of a $k$-graph $G$ is a map $\chi:V(G)\to\mathcal C_m$ such that for every edge $x_1x_2\cdots x_k\in E(G)$, $\{\chi(x_1),\chi(x_2),\ldots,\chi(x_k)\}\in\mathcal P_{k,m}$ .

\begin{lemma}\label{thm:finite-coloring}
Let $m\ge k\ge 3$.
Every $k$-graph on at most $m$ vertices that admits a
  $\mathcal P_{k,m}$-coloring is layered.
\end{lemma}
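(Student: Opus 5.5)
The plan is to combine Lemma~\ref{lem:merging} with Lemma~\ref{lem:quotient-characterization}. The only missing input is an orientation of $G$ compatible with $\chi$, and the definition of a $\mathcal P_{k,m}$-coloring is designed to supply exactly that.

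Let $G$ be a $k$-graph on at most $m$ vertices, and let $\chi:V(G)\to\mathcal C_m$ be a $\mathcal P_{k,m}$-coloring. Take any edge $e=x_1\cdots x_k$. Since $\{\chi(x_1),\ldots,\chi(x_k)\}\in\mathcal P_{k,m}$, there is an index $j\in[k]$ such that
\[
\chi(x_1)\cdots\widehat{\chi(x_j)}\cdots\chi(x_k)\to_m\chi(x_j).
\]
We make $x_j$ the head of $e$, choosing any such $j$ if there are several. Some care is needed when $\chi$ takes repeated values on $e$. The membership in $\mathcal P_{k,m}$ is stated for the multiset, so a witnessing relation only says that some element of the multiset plays the role of the head tree. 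Any vertex of $e$ carrying that tree can therefore serve as the head: swapping two vertices with equal $\chi$-values leaves the tail multiset unchanged. Lemma~\ref{lem:tree-basic}(2) shows the head tree differs from every tail tree, but we do not need this here.

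This choice defines a $d$-to-$1$ orientation $\rbjt G$ in which $\chi(x_1)\cdots\chi(x_d)\to_m\chi(z)$ holds for every oriented edge $x_1\cdots x_d\to z$. This is exactly the hypothesis of Lemma~\ref{lem:merging}, which needs only $|V(G)|\le m$. That lemma gives a partition $\mathcal Q$ of $V(G)$ satisfying both conditions of Lemma~\ref{lem:quotient-characterization} with respect to $\rbjt G$. By the ``if'' direction of Lemma~\ref{lem:quotient-characterization}, $G$ is layered.

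I expect no real obstacle. The substantive work, namely merging parts and ruling out cycles in the quotient digraph, has already been done in Lemma~\ref{lem:merging}. The only point to state explicitly is the multiset bookkeeping that lets a relation on $\chi$-values be turned into a choice of head vertex. The hypothesis $m\ge k$ is not used in this argument beyond guaranteeing that the setting is nontrivial.
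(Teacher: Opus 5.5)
Your proposal is correct and follows the paper's proof: orient each edge by choosing a head $z$ whose tree satisfies $\chi(x_1)\cdots\chi(x_d)\to_m\chi(z)$, apply Lemma~\ref{lem:merging} (using $|V(G)|\le m$), and conclude with the ``if'' direction of Lemma~\ref{lem:quotient-characterization}. Your extra remark on repeated colours is harmless; by Lemma~\ref{lem:tree-basic}(2) the head tree occurs only once in the multiset anyway.
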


\begin{proof}
Let $G$ be a $k$-graph with $|V(G)|\le m$, and let
$\chi:V(G)\to\mathcal C_m$ be a $\mathcal P_{k,m}$-coloring.  Consider an
edge $e\in E(G)$.  Since the multiset of colors on $e$ belongs to
$\mathcal P_{k,m}$, we can choose one vertex $z\in e$ as a head and list the
remaining vertices as $x_1,\ldots,x_d$ so that $\chi(x_1)\cdots\chi(x_d)\to_m\chi(z)$.
Orient the edge as $x_1\cdots x_d\to z$.  If more than one choice is
possible, choose any one of them.

In this way, every edge of $G$ receives a $d$-to-$1$ orientation, and every
oriented edge $x_1\cdots x_d\to z$ satisfies
\[
  \chi(x_1)\cdots\chi(x_d)\to_m\chi(z).
\]
Since $|V(G)|\le m$, Lemma~\ref{lem:merging} provides a partition
$\mathcal Q$ of $V(G)$ satisfying the two conditions in
Lemma~\ref{lem:quotient-characterization}, which implies that $G$ is layered.
\end{proof}

We now prove Theorem~\ref{thm:main} by constructing a $k$-graph $H_n$ with linear minimum codegree such that all of its subgraphs of order at most $m$ are layered.

\begin{proof}[\bfseries{Proof of Theorem~\ref{thm:main}}]
Let $F$ be a non-layered $k$-graph on $m$ vertices.  By
Lemma~\ref{thm:finite-coloring}, $F$ has no
$\mathcal P_{k,m}$-coloring.

We define an $n$ vertex $k$-graph $H_n$ as follows.
Partition the vertex set of $H_n$ into classes
$V_A$, indexed by $A\in\mathcal C_m$, whose sizes differ by at most one.
Thus $|V_A|\geq\lfloor n/|\mathcal C_m|\rfloor$ for every $A\in\mathcal C_m$. Any $k$
distinct vertices form an edge of $H_n$ if and only if the multiset of the labels
of their classes belongs to $\mathcal P_{k,m}$.

Let $S=\{x_1,\ldots,x_d\}$ be an arbitrary $(k-1)$-subset of vertices, and suppose that $x_i\in V_{A_i}$ for $i\in[d]$.  The colors $A_1,\ldots,A_d$ may be repeated.  By Lemma~\ref{lem:tree-basic}(3), there is some $C\in\mathcal C_m$ such that $A_1\cdots A_d\to_m C$.
By the definition of $\mathcal P_{k,m}$, this gives $\{A_1,\ldots,A_d,C\}\in\mathcal P_{k,m}$. Moreover, Lemma~\ref{lem:tree-basic}(2) gives
$C\notin\{A_1,\ldots,A_d\}$.  Hence $V_C\cap S=\varnothing$, and every
vertex of $V_C$ is a common neighbor of $S$.  We therefore obtain
\[
\delta_{\mathrm{co}}(H_n)\ge\left\lfloor\frac{n}{|\mathcal C_m|}\right\rfloor.
\]

We next show that $H_n$ is $F$-free.  Suppose, for the sake of contradiction,
that $H_n$ contains a copy of $F$.  Assign to each vertex of this copy the
label of the class containing it.  For every edge of the copy, the multiset
of its $k$ class labels belongs to $\mathcal P_{k,m}$ by the definition of
$H_n$.  This assignment is therefore a $\mathcal P_{k,m}$-coloring of $F$,
contrary to the choice of $F$.  Hence $H_n$ is $F$-free. Therefore,
\[
  \pico(F)\ge\frac{1}{|\mathcal C_m|}.
\]

The tree $T_m$ has $1+d+\cdots+d^m=q_{k,m}$ vertices, and each of its vertices has a label in $[q_{k,m}]$.  Hence the total number of labelings of $T_m$ is $q_{k,m}^{q_{k,m}}$.  Since $\mathcal C_m$ consists only of the admissible labelings, it follows that $|\mathcal C_m|\le q_{k,m}^{q_{k,m}}$.
Consequently, $$\pico(F)\ge q_{k,m}^{-q_{k,m}}>0.$$ This completes the proof of Theorem~\ref{thm:main}.
\end{proof}

\begin{proof}[\bfseries{Proof of Corollary~\ref{cor:characterization}}]
Suppose first that $F$ is a $3$-graph with $\pico(F)=0$.  The  case $k=3$ of Theorem~\ref{thm:main} shows that $F$ is layered. And then $\piunif(F)=0$ by Theorem~\ref{thm:known}(2).  
Conversely, if $F$ is layered and $\piunif(F)=0$, Theorem~\ref{thm:known}(2) implies that $\pico(F)=0$.  This proves the first equivalence.

By the characterization of Reiher, R\"odl and Schacht~\cite{RRS}, every linear $3$-graph has zero uniform Tur\'an density.  The second equivalence
follows from the first.  
\end{proof}

\section{Concluding remarks}\label{sec4}

For a layered function $f$ of a $k$-graph $F$, 
let $|f(V(F))|$ be the number of distinct elements in $\{ f(v) \mid v\in V(F) \}$. We call $f$ \emph{minimum} if $|f(V(F))|$ is minimum among
all layered functions of $F$. 
The minimal layered function of $F$ has the following property.

\begin{proposition}\label{layered}
The minimal layered function of $F$ satisfies {\em (A3)}.
\end{proposition}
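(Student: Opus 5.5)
The plan is to argue by contradiction: if a minimum layered function violated (A3), we would build another layered function using strictly fewer distinct labels, contradicting minimality.

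Let $f$ be a minimum layered function of $F$, and suppose there are edges $e,e'$ with $|f(e)\cap f(e')|\ge k-1$ (intersection of multisets) but $f(e)\ne f(e')$. Since both multisets have size $k$, we can write $f(e)=M\cup\{a\}$ and $f(e')=M\cup\{b\}$ with $|M|=k-1$ and $a\ne b$. Put $\alpha=\max f(e)$ and $\beta=\max f(e')$. By (A2), $f(e)\ne f(e')$ forces $\alpha\ne\beta$; by symmetry assume $\alpha<\beta$. Then $\beta\notin M$, since otherwise $\beta\in f(e)$ and $\beta\le\alpha$. Hence $\beta=b$, so $b$ is strictly larger than every element of $M\cup\{a\}$. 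In particular $a<b$ and $\alpha<b$.

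Define $f'$ by $f'(v)=a$ if $f(v)=b$, and $f'(v)=f(v)$ otherwise. Since $a$ is already used (on $e$), $f'$ uses exactly one label fewer than $f$. So it suffices to check that $f'$ is layered, which we do by splitting the edges $g$ of $F$ according to $\gamma=\max f(g)$.

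\emph{Case $\gamma<b$.} Then $g$ contains no label $b$, so $f'(g)=f(g)$.

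\emph{Case $\gamma>b$.} The unique maximum $\gamma$ is untouched, and every $b$ in $g$ becomes $a<\gamma$. Thus (A1) still holds. All edges with maximum $\gamma$ had equal multisets under $f$, and the same substitution is applied to each, so they remain equal.

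\emph{Case $\gamma=b$.} By (A2) applied to $f$, $f(g)=f(e')=M\cup\{b\}$, and $b$ occurs exactly once in it by (A1). Hence $f'(g)=M\cup\{a\}=f(e)$. This multiset has unique maximum $\alpha$ because $f$ satisfies (A1) on $e$.

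For (A2) under $f'$, we must compare all edges whose $f'$-maximum is a given value; new coincidences can arise only at the value $\alpha$. The edges with $f'$-maximum $\alpha$ are of two kinds: the old edges with $f$-maximum $\alpha$, whose multiset is $f(e)$ by (A2) for $f$ and is unchanged because $b>\alpha$ does not occur in them; and the former maximum-$b$ edges, whose multiset is now $f(e)$ by the case $\gamma=b$. So every edge with $f'$-maximum $\alpha$ has label multiset $f(e)$, and (A2) holds. Maxima $\gamma\ne\alpha$ with $\gamma<b$ or $\gamma>b$ were handled above.

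Hence $f'$ is a layered function with $|f'(V(F))|=|f(V(F))|-1$, contradicting the minimality of $f$, and so $f$ satisfies (A3).

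The main obstacle is the bookkeeping in (A2) at the level $\alpha$, where old maximum-$\alpha$ edges and the relabelled maximum-$b$ edges merge. The key observation is that both families have multiset exactly $f(e)$, which rests on $b$ being the strict maximum of $e'$ and hence larger than $\alpha$.
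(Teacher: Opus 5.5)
Your proof is correct and follows essentially the same route as the paper. Both arguments relabel the unique top label of the edge with the larger maximum ($b$ in your notation, $p$ in the paper's) by the differing label of the other edge ($a$, respectively $q$). Both then check (A1) and (A2) by cases on the old maximum, observing that the relabelled edges merge with the level of the smaller maximum. This yields a layered function with one fewer label. The only difference is naming: you take $e'$ to be the edge with the larger maximum, whereas the paper takes $e$.
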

\begin{proof}
Let $f$ be a minimum layered function of $F$. Suppose that $f$ does not satisfy (A3). Then there exist $e,e'\in E(F)$ such that
\[
 |f(e)\cap f(e')|\ge k-1
 \qquad\text{and}\qquad
 f(e)\ne f(e').
\]
By \textnormal{(A2)},
$\max f(e)\ne\max f(e')$. After interchanging $e$ and $e'$ if necessary,
let $p=\max f(e)>t:=\max f(e')$. Then $p\notin f(e')$, and
\textnormal{(A1)} says that $p$ occurs exactly once in $f(e)$. Hence, for
some multiset $C$ and some $q\le t$, we may write
\[
 f(e)=C\cup\{p\},\qquad
 f(e')=C\cup\{q\},\qquad
 p>t\ge q.
\]
Define $g\colon V(F)\to\mathbb N$ by
\[
 g(v)=
 \begin{cases}
  q,&f(v)=p,\\
  f(v),&f(v)\ne p.
 \end{cases}
\]
Thus $g(e)=g(e')=f(e')$.

Let $a\in E(F)$. If $\max f(a)>p$, its unique maximum is unchanged; if
$\max f(a)<p$, then $g(a)=f(a)$; and if $\max f(a)=p$, then
\textnormal{(A2)} gives $f(a)=f(e)$, so $g(a)=f(e')$, which has a unique
maximum. Hence $g$ satisfies \textnormal{(A1)}, and
\begin{equation}\label{eqmax}
 \max g(a)=
 \begin{cases}
  t,&\max f(a)=p,\\
  \max f(a),&\max f(a)\ne p.
 \end{cases}
\end{equation}

Now suppose that $\max g(a)=\max g(b)=\ell$. If $\ell\ne t$, then
(\ref{eqmax}) yields $\max f(a)=\max f(b)=\ell$, so \textnormal{(A2)} for $f$
implies $g(a)=g(b)$. If $\ell=t$, then each of $f(a)$ and $f(b)$ equals
either $f(e)$ or $f(e')$; under the replacement $p\mapsto q$, both become
$f(e')$. Thus $g(a)=g(b)$, and $g$ also satisfies \textnormal{(A2)}.

Therefore, $g$ is layered. However, $q$ already occurs in $f(e')$ while
every occurrence of $p$ has been removed. Hence
$|g(V(F))|=|f(V(F))|-1,$
contradicting the minimality of $f$. Thus $f$ satisfies
\textnormal{(A3)}.
\end{proof}

\section*{Acknowledgement}
\noindent This research is supported by the National Key R\&D Program of China under grant number 2024YFA1013900, the NSFC under grant number 12471327 and the China Postdoctoral Science Foundation under grant number 2026M793375.

\section*{Declaration}
\noindent\textbf{Conflict of interest.}
The authors declare that they have no known competing financial interests or personal relationships that could have appeared to influence the work reported in this paper.

\section*{Data availability}
No data were used for the research described in this paper.



\end{document}